\theoremstyle{plain}
\newtheorem{theorem}{Theorem}[section]
\newtheorem{lemma}[theorem]{Lemma}
\newtheorem{corollary}[theorem]{Corollary}
\theoremstyle{definition}
\newtheorem{definition}[theorem]{Definition}
\newtheorem{notation}[theorem]{Notation}
\theoremstyle{remark}
\newtheorem{remark}[theorem]{Remark}
\newcommand{\SpectorEq}{\mathcal{BR}}
\newcommand{\GeneralEq}[1]{\mathcal{GBR}_{#1}}
\newcommand{\eqleft}[1]{\begin{itemize} \item[] $#1$ \end{itemize}}
\DeclareMathOperator{\AC}{{\bf AC}}
\DeclareMathOperator{\HAomega}{{\bf HA}^\omega}
\newcommand{\vgt}[1]{``#1''}
\newcommand{\NN}{\mathbb{N}}
\newcommand{\ap}[1]{\langle #1 \rangle}
\newcommand{\apc}[2]{\langle #1 ; #2 \rangle}
\newcommand{\bp}[1]{\left\lbrace #1 \right\rbrace}
\newcommand{\level}[1]{{\rm level}(#1)}
\newcommand{\BR}{{\sf BR}}
\newcommand{\T}{{\sf T}}
\newcommand{\Rec}{{\sf Rec}}
\newcommand{\Suc}{{\sf Succ}}
\newcommand{\eqdef}{\stackrel{{\rm def}}{=}}
\newcommand{\Val}{{\rm Val}}
\newcommand{\BRSec}{{\sf B}}
\newcommand{\hyde}[1] 
\begin{document}

\title{A Direct Proof of Schwichtenberg's \\ Bar Recursion Closure Theorem}

\author{Paulo Oliva and Silvia Steila}

\maketitle

\begin{abstract} In \cite{Schwichtenberg}, Schwichtenberg showed that the System $\T$ definable functionals are closed under a rule-like version Spector's bar recursion of lowest type levels $0$ and $1$. More precisely, if the functional $Y$ which controls the stopping condition of Spector's bar recursor is $\T$-definable, then the corresponding bar recursion of type levels $0$ and $1$ is already $\T$-definable. Schwichtenberg's original proof, however, relies on a detour through Tait's infinitary terms and the correspondence between ordinal recursion for $\alpha < \varepsilon_0$ and primitive recursion over finite types. This detour makes it hard to calculate on given concrete system $\T$ input, what the corresponding system $\T$ output would look like. In this paper we present an alternative (more direct) proof based on an explicit construction which we prove correct via a suitably defined logical relation. We show through an example how this gives a straightforward mechanism for converting bar recursive definitions into $\T$-definitions under the conditions of Schwichtenberg's theorem. Finally, with the explicit construction we can also easily state a sharper result: if $Y$ is in the fragment $\T_i$ then terms built from $\BR^{\NN, \sigma}$ for this particular $Y$ are definable in the fragment $\T_{i + \max \{ 1, \level{\sigma} \} + 2}$. 
\end{abstract}

\section{Introduction}

In \cite{Goedel}, G\"{o}del interpreted intuitionistic arithmetic in a quantifier-free type theory with primitive recursion in all finite types, the so-called System $\T$. This interpretation became known as \vgt{Dialectica}, the name of the journal where it was published. The Dialectica interpretation of arithmetic was extended by Spector to classical analysis in the system \vgt{$\T +$ bar recursion} \cite{Spector(62)}.

The schema of Spector's bar recursion (for a pair of finite types $\tau, \sigma$) is defined as
\begin{equation} \label{spector-def}
\BR^{\tau, \sigma}(G, H, Y)(s) \stackrel{\sigma}{=} 
\left\{
\begin{array}{ll}
	G(s) & {\rm if} \; Y(\hat s) < |s| \\[2mm]
	H(s)(\lambda x^\tau . \BR(G,H,Y)(s * x)) & {\rm otherwise}
\end{array}
\right.
\end{equation}
where $s \colon \tau^*$, $G \colon \tau^* \to \sigma$, $H \colon \tau^* \to (\tau \to \sigma) \to \sigma$ and $Y \colon (\NN \to \tau) \to \NN$. As usual $\hat s$ denotes the infinite extension of the finite sequence $s$ with $0$'s of appropriate type. For clarify of exposition we prefer to separate the arguments that stay fixed during the recursion, namely $G, H$ and $Y$, from the mutable argument $s$.

In \cite{Schwichtenberg}, Schwichtenberg proved that if $Y, G$ and $H$ are closed terms of system $\T$, and if $\tau$ is of type level $0$ or $1$, then the functional $\lambda s . \BR^{\tau, \sigma}(G, H, Y)(s)$ is already $\T$-definable.

Schwichtenberg's original proof is based on the notion of infinite terms as introduced by Tait \cite{Tait} and his argument requires the normalization theorem for infinite terms and the valuation functional provided in \cite{Schwichtenberg73}.  Schwichtenberg  proves that  bar recursions of type levels $0$ and  $1$ are reducible to $\alpha$-recursion for some $\alpha < \varepsilon_0$. Hence, using an interdefinability result from Tait \cite{Tait}, he concludes that they are also reducible to primitive recursions of higher types. Such detour makes it extremely difficult to work out the $\T$-definition of $\lambda G, H, s . \BR^{\tau, \sigma}(G, H, Y)(s)$ for a given concrete $\T$ definable $Y$, for instance, $Y(\alpha) = \Rec^\NN(0, \lambda k. \alpha)(\alpha(0))$, where $\alpha \colon \NN \to \NN$ and $k$ is a fresh variable.

In here we present a direct inductive proof of Schwichtenberg's result which provides an explicit method to eliminate bar recursion of type levels $0$ and $1$ when $Y$ is a concrete system $\T$ term. The focus of our result is syntactic: We describe an effective construction that given a term in $\T + \BR$, satisfying the above restrictions, will produce an equivalent term in system $\T$. We also strengthen Schwichtenberg's result by showing that when $Y$ is $\T$-definable and $\tau$ is of type level $0$ or $1$, then the functional $\lambda G, H, s . \BR^{\tau, \sigma}(G, H, Y)(s)$ is already $\T$-definable (uniformly in $G$ and $H$).

Our proof is composed of two main parts. In the first part (Section \ref{sec-general-br}) we define a variant of bar recursion which we call \emph{general bar recursion} -- a family of bar recursive functions parametrized by bar predicates. We show that when the bar predicate ``secures'' the functional $Y$, then $\BR$ for that $Y$ can be defined from the general bar recursion. In the second part (Section \ref{sec-main-result}) we present the main construction: Given a $\T$-definable $Y$, we can $\T$-define a general bar recursion for a bar predicate which secures $Y$.  The construction of the term which corresponds to the given $\T + \BR$ term is syntactic, as its definition is by induction on the structure of the input term. The proof of equivalence is carried out in intuitionistic Heyting arithmetic in all finite types $\HAomega$. One can, however, also view the result model-theoretically, by looking at models of $\HAomega$. Our result establishes that restricted bar-recursive terms have a denotation which falls within the subset of $\T$-definable elements.

\subsection{Spector's bar recursion} 

The finite types are defined inductively, where $\NN$ is the basic finite type, $\tau_0 \to \tau_1$ is the type of functions from $\tau_0$ to $\tau_1$, and $\tau_0^*$ is the type of finite sequences whose elements are of type $\tau_0$. Note that we have, for convenience, enriched the type system with the type of finite sequences. As usual, we often write $\tau_1^{\tau_0}$ for the type $\tau_0 \to \tau_1$.

System $\T$ \cite{Goedel,Spector(62)} consists of the simply typed $\lambda$-calculus with natural numbers ($0$ and $\Suc$) and the recursor $\Rec^\rho$, for each finite type $\rho$, together with the associated equations:
\begin{equation} 
\Rec^{\rho}(a, f)(n) \stackrel{\rho}{=} 
\left\{
\begin{array}{ll}
	a & {\rm if} \; n = 0 \\[2mm]
	f(m,\Rec^\rho(a, f)(m)) & {\rm if} \; n = \Suc(m)
\end{array}
\right.
\end{equation}
where $a \colon \rho$ and $f \colon \NN \to \rho \to \rho$. When translating bar recursive terms into system $\T$ terms we will also make use of a definitional extension of $\T$ with finite products $\tau \times \sigma$. When $s \colon \tau$ and $t \colon \sigma$ we write $\apc{s}{t}$ for the element of type $\tau \times \sigma$.

As usual, $\NN$ has type level $0$; the type level of $\rho \to \eta$ is the maximum between the type level of $\rho$ plus $1$  and the type level of $\eta$; the type level of $\tau \times \sigma$ is the maximum between the type level of $\tau$ and the type level of $\sigma$; the type level of $\tau^*$ is the type level of $\tau$. We write $\level{\tau}$ for the type level of $\tau$. The fragment of $\T$ where the recursor $\Rec^\rho$ is restricted to types $\rho$ with $\level{\rho} \leq i$ is denoted $\T_i$.

\begin{definition}[Spector's bar recursion] For each pair of types $\tau, \sigma$, let $\SpectorEq^{\tau, \sigma}$ be the universal formula
\begin{equation*} \label{spector-def-eq}
\SpectorEq^{\tau, \sigma}(\xi,G,H,Y) \; \eqdef \; \forall s^{\tau^*} 
\left\{
\begin{array}{lcl}
	Y(\hat s) < |s| & \to & \xi(G, H, Y)(s) \stackrel{\sigma}{=} G(s)  \\[1mm]
	& \wedge&   \\[1mm]
	Y(\hat s) \geq |s| & \to & \xi(G, H, Y)(s) \stackrel{\sigma}{=} H(s)(\lambda x^\tau . \xi(G,H,Y)(s * x))
\end{array}
\right\}
\end{equation*}
where
\[ \xi \colon (\tau^* \to \sigma) \to (\tau^* \to (\tau \to \sigma) \to \sigma) \to (\tau^\NN \to \NN) \to \tau^* \to \sigma \]
The extension of system $\T$ with Spector's bar recursion consists of adding to the language of $\T$ a family of constants $\BR^{\tau, \sigma}$, for each pair of finite types $\tau, \sigma$, together with the defining axioms $\forall G, H, Y \, \SpectorEq^{\tau,\sigma}(\BR^{\tau,\sigma},G,H,Y)$. We speak of Spector bar recursion of type level $i$ when $\tau$  has type level $i$.

\end{definition}


When we omit an argument of $\SpectorEq^{\tau, \sigma}(\xi,G,H,Y)$ we will assume it is universally quantified, e.g. 
\[
\begin{array}{lcl}
\SpectorEq^{\tau,\sigma}(\xi, G) & \eqdef & \forall H, Y \, \SpectorEq^{\tau,\sigma}(\xi,G,H,Y) \\[2mm]
\SpectorEq^{\tau,\sigma}(\xi) & \eqdef & \forall G, H, Y \, \SpectorEq^{\tau,\sigma}(\xi,G,H,Y)
\end{array}
\]
We will also use \emph{named parameters} in order to fix a particular parameter of $\xi$, e.g. if $t$ is a term having the same type as $Y$ then $\SpectorEq^{\tau,\sigma}(\xi, Y=t)$ stands for the formula
\[
\forall G, H \, \forall s^{\tau^*} 
\left\{
\begin{array}{lcl}
	t(\hat s) < |s| & \to & \xi(G, H)(s) \stackrel{\sigma}{=} G(s)  \\[1mm]
	& \wedge&   \\[1mm]
	t(\hat s) \geq |s| & \to & \xi(G, H)(s) \stackrel{\sigma}{=} H(s)(\lambda x^\tau . \xi(G,H)(s * x))
\end{array}
\right\}
\]
where we replace $Y$ by $t$ and omit the argument $Y$ from $\xi$. Finally, when clear from the context we will omit the superscript types, and write simply $\SpectorEq$.

\begin{remark}[Related work] A previous analysis by Kreisel (see e.g. \cite{Spector(62)}), together with the reduction provided by Howard \cite{Howard1}, guarantees that system $\T$ is not closed under the bar recursion rule when $\tau$ has type level greater or equal to $2$.  Diller \cite{Diller} presented a reduction of bar recursion  to $\alpha$-recursion for some bounded ordinal $\alpha$, while Howard \cite{Howard2, Howard3} provided an ordinal analysis of the constant of bar recursion of type level $0$. Kreuzer \cite{Kreuzer12} refined Howard's ordinal analysis of bar recursion in terms of Grzegorczyk's hierarchy. In \cite{Kohlenbach99}, Kohlenbach generalised Schwichtenberg's result by showing that if $Y[\vec{x},\vec{f}] \colon \NN$ is a term with variables $\vec{x}$ of type level $0$ and $\vec{f}$ of type level $1$, then the bar recursive functional of type level $0$ provided by $Y$, is $\T$-definable. Kohlenbach's argument is based on the observation that in Schwichtenberg's result no restrictions are put on the type $\sigma$, hence it is possible to relativize Schwichtenberg's proof where $Y$ is allowed to contains parameters of type levels $0$ and $1$ in system $\T$. The same argument can be carried over to our construction below.
\end{remark}

\begin{notation} Throughout the paper we adopt the following conventions:
\begin{itemize}
\item We use $\tau, \sigma, \rho, \eta$ to denote finite types.
\item We write $a \colon \tau$ or $a^\tau$ to indicate that $a$ is a term of type $\tau$. 
\item A tuple of variables $x_1, \ldots, x_n$ will be denoted by $\vec{x}$.
\item The term $0^\tau$  denotes the standard inductively defined zero object of type $\tau$.
\item Given a finite sequence $s \colon \tau^*$, $\hat{s} \colon \NN \to \tau$ denotes the extension of $s$ with infinitely many $0^\tau$. 
\item For any finite sequence $s \colon \tau^*$ and any $x \colon \tau$, $s*x$ denotes appending $x$ to $s$.
\item For any finite sequences $s, s' \colon \tau^*$, $s*s'$ denotes their concatenation. 
\item Given $s \colon \tau^*$ and an infinite sequence $\alpha \colon \tau^\NN$, we also write $s*\alpha$ to denote their concatenation. 
\item For any infinite sequence $\alpha \colon \NN \to \tau$, $\bar{\alpha}n$ denotes the finite sequence $\ap{\alpha(0), \dots, \alpha(n-1)}$. We also use the same notation for finite sequences $s \colon \tau^*$ when $n \leq |s|$. 
\end{itemize}
\end{notation}

\section{General Bar Recursion}
\label{sec-general-br}

Let us start by observing that if $Y \colon (\NN \to \NN) \to \NN$ is a constant function then bar recursion for such $Y$ is $\T$-definable, for any types $\tau, \sigma$.

\begin{lemma}[$\HAomega$] \label{constant-lemma} For each $\tau, \sigma$, let $i = \max\{1 + \level{\tau}, \level{\sigma}\}$, there is a closed term
\[ \Psi \colon \NN \to (\tau^* \to \sigma) \to (\tau^* \to (\tau \to \sigma) \to \sigma) \to \tau^* \to \sigma \]
in $\T_i$ such that for all $k \colon \NN$ we have $\SpectorEq(\Psi(k), Y = \lambda \alpha . k)$.
\end{lemma}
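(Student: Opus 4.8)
The plan is to exploit the fact that when $Y = \lambda \alpha . k$ is constant, the stopping condition $Y(\hat s) < |s|$ collapses to the purely arithmetical test $k < |s|$, which depends only on the \emph{length} of $s$. Hence the recursion defining $\BR^{\tau,\sigma}$ unfolds for at most $(k+1) \mathbin{\dot{-}} |s|$ steps before the sequence reaches length $k+1$ and the base clause $G(s)$ fires, where $\mathbin{\dot{-}}$ denotes truncated subtraction (definable with $\Rec^\NN$). I would therefore introduce a depth counter and define, with $G$ and $H$ held fixed, an auxiliary functional $\Theta_{G,H} \colon \NN \to (\tau^* \to \sigma)$ by the recursor $\Rec^{\tau^* \to \sigma}$:
\[ \Theta_{G,H}(0) = G, \qquad \Theta_{G,H}(\Suc(n)) = \lambda s . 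H(s)(\lambda x . \Theta_{G,H}(n)(s * x)). \]
Intuitively $\Theta_{G,H}(n)(s)$ computes the bar-recursion value under the assumption that at most $n$ further unfoldings are needed. I would then set
\[ \Psi \eqdef \lambda k . \lambda G . \lambda H . \lambda s . \Theta_{G,H}\big((k+1) \mathbin{\dot{-}} |s|\big)(s), \]
so that the depth handed to $\Theta_{G,H}$ is exactly the number of one-step extensions of $s$ required to reach length $k+1$.

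Next I would verify $\SpectorEq(\Psi(k), Y = \lambda \alpha . k)$ by a case distinction on $s$. If $k < |s|$ then $(k+1) \mathbin{\dot{-}} |s| = 0$, so $\Psi(k)(G,H)(s) = \Theta_{G,H}(0)(s) = G(s)$, matching the first clause. If $k \geq |s|$ then $(k+1) \mathbin{\dot{-}} |s| = \Suc(m)$ with $m = k \mathbin{\dot{-}} |s|$, whence $\Psi(k)(G,H)(s) = \Theta_{G,H}(\Suc(m))(s) = H(s)(\lambda x . \Theta_{G,H}(m)(s * x))$; since $|s * x| = |s| + 1 \leq k+1$ we get $(k+1) \mathbin{\dot{-}} |s * x| = m$, so $\Theta_{G,H}(m)(s * x) = \Psi(k)(G,H)(s * x)$ and the second clause follows. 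Both identities are provable in $\HAomega$ using only the defining equations of $\Rec$ together with the elementary arithmetic of $|\cdot|$, $*$ and $\mathbin{\dot{-}}$.

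The point that requires care — and the only genuine obstacle — is the type-level bound, since a naive formulation would recurse at the full type of $\Psi(k)$, namely $(\tau^* \to \sigma) \to (\tau^* \to (\tau \to \sigma) \to \sigma) \to \tau^* \to \sigma$, whose level strictly exceeds $i$. The construction sidesteps this by abstracting $G$ and $H$ \emph{before} invoking the recursor: inside $\Theta_{G,H}$ the variables $G, H$ occur free, and the recursion runs at the type $\tau^* \to \sigma$, so the only higher-type recursor involved is $\Rec^{\tau^* \to \sigma}$, whose type has level $\max\{\level{\tau}+1, \level{\sigma}\} = i$ (the auxiliary $\mathbin{\dot{-}}$ and $|\cdot|$ need only $\Rec^\NN$). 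Since $\Psi$ is obtained from $\Theta_{G,H}$ purely by $\lambda$-abstraction and application, operations that do not raise the recursor type, it is a closed term of $\T_i$, which completes the argument.
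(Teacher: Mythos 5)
Your proposal is correct and is essentially the paper's own proof: the auxiliary $\Theta_{G,H}$ is exactly the paper's $\varphi(G,H)$, defined by $\Rec^{\tau^*\to\sigma}$ with $G,H$ abstracted out so that the recursor type stays at level $i$, and $\Psi$ is then obtained by feeding it the remaining depth $(k+1)-|s|$. The only (cosmetic) difference is that you fold the stopping case into a single uniform definition via truncated subtraction, where the paper uses an explicit case split on $|s|>k$ versus $|s|\leq k$, which costs it one extra case ($|s|=k$) in the verification.
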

\begin{proof} We define a term $\Psi$ and show it satisfies
\[ 
\Psi(k)(G, H)(s) \stackrel{\sigma}{=} 
\left\{
\begin{array}{ll}
	G(s) & {\rm if} \; |s| > k \\[2mm]
	H(s)(\lambda x^\tau . \Psi(k)(G,H)(s * x)) & {\rm if} \; |s| \leq k
\end{array}
\right.
\]
for all $k, G, H$ and $s$. First define the functional
\[
\varphi \colon (\tau^* \to \sigma) \to (\tau^* \to (\tau \to \sigma) \to \sigma) \to \NN \to \tau^* \to \sigma
\]
by primitive recursion as
\begin{equation} \label{varphi-def}
\varphi(G, H)(n) \eqdef  
\left\{
\begin{array}{ll}
	G & {\rm if} \; n = 0 \\[2mm]
	\lambda s^{\tau^*} . H(s)(\lambda x^\tau . \varphi(G,H)(n-1)(s * x)) & {\rm if} \; n > 0.
\end{array}
\right.
\end{equation}
Then, using $\varphi$, define the functional $\Psi$ by cases as
\begin{equation} \label{Psi-def}
\Psi(k)(G, H)(s) \eqdef 
\left\{
\begin{array}{ll}
	G(s) & {\rm if} \; |s| > k \\[2mm]
	\varphi(G, H)(k + 1 - |s|)(s) & {\rm if} \; |s| \leq k.
\end{array}
\right.
\end{equation}
Clearly the functional $\Psi$ is $\T$-definable, and only requires primitive recursion of type $\tau^* \to \sigma$, so it is in fact definable in $\T_i$ for $i = \max\{1 + \level{\tau}, \level{\sigma}\}$. It remains for us to prove that $\Psi(k)(G,H)$ satisfies the above mentioned equation. \\[1mm]
Let $k, G, H$ and $s$ be fixed. 
If $|s| > k$ then
\[ \Psi(k)(G, H)(s) \stackrel{(\ref{Psi-def})}{=} G(s). \]
When $|s| \leq k$, we distinguish two cases. If $(\dagger) \, |s| = k$ then
\[
\begin{array}{lcl}
\Psi(k)(G,H)(s)
	& \stackrel{(\ref{Psi-def})}{=} & \varphi(G,H)(k+1-|s|)(s) \\
	& \stackrel{(\dagger)}{=} & \varphi(G,H)(1)(s) \\
	& \stackrel{(\ref{varphi-def})}{=} & H(s)(\lambda x^\tau . \varphi(G,H)(0)(s * x)) \\
	& \stackrel{(\ref{varphi-def})}{=} & H(s)(\lambda x^\tau . G(s * x)) \\
	& \stackrel{(\ref{Psi-def})}{=} & H(s)(\lambda x^\tau . \Psi(k)(G,H)(s * x)).
\end{array}
\]
If $|s| < k$, we have
\[
\begin{array}{lcl}
\Psi(k)(G,H)(s)
	& \stackrel{(\ref{Psi-def})}{=} & \varphi(G,H)(k+1-|s|)(s) \\
	& \stackrel{(\ref{varphi-def})}{=} & H(s)(\lambda x. \varphi(G,H)(k+1-|s|-1)(s*x)) \\[1mm]
	& = & H(s)(\lambda x. \varphi(G,H)(k+1-|s*x|)(s*x)) \\
	& \stackrel{(\ref{Psi-def})}{=} & H(s)(\lambda x. \Psi(k)(G,H)(s*x)).
\end{array}
\]
\end{proof}

A predicate $S(s^{\tau^*})$ is called a \emph{bar} if it satisfies the following three conditions:
\begin{itemize}
	\item[$(i)$] \emph{Decidable}: $\forall s^{\tau^*} (S(s) \vee \neg S(s))$
	\item[$(ii)$] \emph{Bar}: $\forall \alpha^{\tau^\NN} \exists n^\NN S(\bar{\alpha} n)$
	\item[$(iii)$] \emph{Monotone}: $\forall s^{\tau^*}, t^{\tau^*} (S(s) \to S(s * t))$
\end{itemize}
We now introduce a variant of Spector's bar recursion, which we call \emph{general bar recursion}. These are parametrized by a bar predicate $S(s^{\tau^*})$.

\begin{definition}[General bar recursion] For each pair of types $\tau, \sigma$, and a bar predicate $S(s^{\tau^*})$, let $\GeneralEq{S}^{\tau, \sigma}$ be the formula
\begin{equation} \label{secure-def}
\GeneralEq{S}^{\tau, \sigma}(\xi,G,H) \; \eqdef \; \forall s^{\tau^*} 
\left\{
\begin{array}{lcl}
	S(s) & \to & \xi(G, H)(s) \stackrel{\sigma}{=} G(s)  \\[1mm]
	& \wedge&   \\[1mm]
	\neg S(s) & \to & \xi(G, H)(s) \stackrel{\sigma}{=} H(s)(\lambda x^\tau . \xi(G,H)(s * x))
\end{array}
\right\}
\end{equation}
where $\xi \colon (\tau^* \to \sigma) \to (\tau^* \to (\tau \to \sigma) \to \sigma) \to \tau^* \to \sigma$.
\end{definition}

When clear from the context we will omit the superscript types, writing simply $\GeneralEq{S}$ instead of $\GeneralEq{S}^{\tau,\sigma}$. And once again, we write  $\GeneralEq{S}(\xi)$ as a shorthand for  $\forall G, H \, \GeneralEq{S}(\xi,G,H)$. 

\begin{definition} We say that a bar $S$ \emph{secures $Y \colon \tau^\NN \to \NN$} if for all $s^{\tau^*}$
\[ S(s) \quad \Rightarrow \quad \mbox{$\lambda \beta. Y(s*\beta)$ is constant.} \]
\end{definition}

\begin{theorem}[$\HAomega$] \label{secure-br-thm} Let $\tau, \sigma$ be fixed, and $i = \max\{1 + \level{\tau}, \level{\sigma}\}$. Let also $t \colon \tau^\NN \to \NN$ be a fixed closed term in $\T_i$. There is a $\T_i$-term $\Phi^t$ such that for any bar $S$ securing $t$
\[ \GeneralEq{S}(\Delta) \quad \Rightarrow \quad \SpectorEq(\Phi^t(\Delta), Y = t) \]
\end{theorem}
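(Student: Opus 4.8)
The plan is to define $\Phi^t$ so that it runs the given general bar recursor $\Delta$ essentially unchanged, but feeds it a doctored base function $G'$ and step function $H'$ that together repair the mismatch between the two stopping conditions $S(s)$ and $t(\hat s) < |s|$. The guiding observation is this: once we reach a node $s$ with $S(s)$, the securing hypothesis tells us that $\lambda\beta. t(s*\beta)$ is constant, with value $c := t(\hat s)$; hence on the whole subtree above $s$ Spector's recursion for $Y = t$ behaves exactly like Spector's recursion for the constant functional $\lambda\alpha. c$, which by Lemma \ref{constant-lemma} is computed by $\Psi(c)$. I therefore set
\[ \Phi^t(\Delta)(G,H)(s) \eqdef \Delta(G',H')(s), \]
where the base function returns the $\Psi$-value appropriate to the constant region, $G'(s) \eqdef \Psi(t(\hat s))(G,H)(s)$, and the step function reinstates Spector's own test, $H'(s)(f) \eqdef G(s)$ if $t(\hat s) < |s|$ and $H'(s)(f) \eqdef H(s)(f)$ otherwise. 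Since $t$ and $\Psi$ are $\T_i$-terms and the test $t(\hat s) < |s|$ is decidable, $G'$ and $H'$ are $\T_i$-definable uniformly in $G,H$, so $\Phi^t$ is a $\T_i$-term; note that it does not depend on $S$, as the theorem demands.

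It then remains to verify, writing $\Xi := \Phi^t(\Delta)(G,H)$, that $\Xi$ satisfies the Spector clauses for $Y=t$ at every $s$. I would argue by a single case distinction on the decidable predicate $S(s)$, using only the defining equations $\GeneralEq{S}(\Delta)$ and the equation for $\Psi$ from Lemma \ref{constant-lemma}; notably, no appeal to bar induction is required. If $\neg S(s)$ then $\Xi(s) = H'(s)(\lambda x.\Xi(s*x))$, and unfolding $H'$ gives exactly $G(s)$ when $t(\hat s) < |s|$ and exactly $H(s)(\lambda x.\Xi(s*x))$ when $t(\hat s) \geq |s|$ -- precisely the two Spector clauses. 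If $S(s)$ then $\Xi(s) = G'(s) = \Psi(c)(G,H)(s)$ with $c = t(\hat s)$, and I would split again on $t(\hat s) < |s|$. When $c < |s|$ the $\Psi$-equation collapses to $G(s)$, matching the first Spector clause. When $c \geq |s|$ the $\Psi$-equation unfolds to $H(s)(\lambda x.\Psi(c)(G,H)(s*x))$, and I would identify each $\Psi(c)(G,H)(s*x)$ with $\Xi(s*x)$: monotonicity of $S$ yields $S(s*x)$, so $\Xi(s*x) = G'(s*x) = \Psi(t(\widehat{s*x}))(G,H)(s*x)$, while the securing hypothesis forces $t(\widehat{s*x}) = c$, yielding the second Spector clause.

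The only genuinely delicate point is this last identification, where the two bookkeeping devices must agree: $\Psi$ has to continue the recursion on the subtree above an $S$-node in lockstep with $\Delta$, and this works precisely because (i) monotonicity propagates $S$ upward, so that $\Delta$ keeps selecting the branch returning $G'$, and (ii) the securing hypothesis pins $t$ to the same constant $c$, so that $G'$ keeps selecting the same $\Psi(c)$. Here I would check carefully that $\widehat{s*x}$ has the form $s * \beta$, so that the constancy of $\lambda\beta.t(s*\beta)$ indeed gives $t(\widehat{s*x}) = t(\hat s)$, and that the two functions $\lambda x.\Psi(c)(G,H)(s*x)$ and $\lambda x.\Xi(s*x)$, shown equal at each argument, may be substituted under $H(s)$ using the equality at type $\sigma$ of $\HAomega$. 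All remaining manipulations are routine equational reasoning.
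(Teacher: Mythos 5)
Your proposal is correct and is essentially identical to the paper's own proof: your $G'$ and $H'$ are exactly the paper's $\lambda s'.\Psi(t(\widehat{s'}))(G,H)(s')$ and ${\mathcal H}^t(G,H)$, and your verification performs the same case analysis on $S(s)$ and $t(\hat s)<|s|$ (merely in a different order), invoking monotonicity of the bar, the securing hypothesis, and Lemma \ref{constant-lemma} at the same points. No gaps here.
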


\begin{proof} Let $t$ be fixed and assume ($\dagger$) $S$ is a bar securing $t$. First, define the construction
\[ {\mathcal H}^t \colon (\tau^* \to \sigma) \to (\tau^* \to (\tau \to \sigma) \to \sigma) \to \tau^* \to (\tau \to \sigma) \to \sigma \]
as
\begin{equation} \label{H-eq}
{\mathcal H}^t(G,H)(s)(f^{\tau \to \sigma}) \eqdef
\begin{cases}
    G(s) & {\rm if} \; t(\hat s) < |s| \\[2mm]
    H(s)(f) & {\rm otherwise},
\end{cases}
\end{equation}
and let $\Phi^t$ be the $\T_i$-definable term:
\begin{equation} \label{Phi-Y-eq}
\Phi^t(\Delta)(G, H)(s) \eqdef \Delta(\lambda s' . \Psi(t(\widehat{s'}))(G, H)(s'), {\mathcal H}^t(G, H))(s)
\end{equation}
where $\Psi$ is the construction given in the proof of Lemma \ref{constant-lemma} and $\Delta$ has type $$(\tau^* \to \sigma) \to (\tau^* \to (\tau \to \sigma) \to \sigma) \to \tau^* \to \sigma.$$
Suppose $\Delta$ is such that $(\ddagger) \; \GeneralEq{S}(\Delta)$. We must show $\SpectorEq(\Phi^t(\Delta),Y = t)$.
First we must show that if $t(\hat s) < |s|$ then $\Phi^t(\Delta) = G(s)$. So we assume $t(\hat s) < |s|$ and consider two cases (using the decidability of the bar): \\[1mm]
If $S(s)$ then
\eqleft{
\begin{array}{lcl}
\Phi^t(\Delta)(G, H)(s)
	& \stackrel{(\ref{Phi-Y-eq})}{=} & \Delta(\lambda s' . \Psi(t(\widehat{s'}))(G, H)(s'), {\mathcal H}^t(G, H))(s) \\
	& \stackrel{(\ddagger)}{=} & \Psi(t \hat{s})(G, H)(s) \\
	& \stackrel{(\textup{L}\ref{constant-lemma})}{=} & G(s)
\end{array}
}
whereas if $\neg S(s)$, then
\eqleft{
\begin{array}{lcl}
\Phi^t(\Delta)(G, H)(s)
	& \stackrel{(\ref{Phi-Y-eq})}{=} & \Delta(\lambda s' . \Psi(t(\widehat{s'}))(G, H)(s'), {\mathcal H}^t(G, H))(s) \\
	& \stackrel{(\ddagger)}{=} & {\mathcal H}^t(G, H)(s)(\lambda x . \Phi^t(\Delta)(G,H)(s*x)) \\
	& \stackrel{(\ref{H-eq})}{=} & G(s).
\end{array}
}
Secondly, we must show that when $t(\hat s) \geq |s|$ then $$\Phi^t(\Delta)(G,H)(s) = H(s)(\lambda x . \Phi^t(\Delta)(G,H)(s * x)).$$ 
Again we assume $t(\hat s) \geq |s|$ and consider two cases: \\[1mm]
If $S(s)$ then, by our assumption ($\dagger$), $\lambda \beta. t(s*\beta)$ is constant, and in particular $(*)$ $t(\widehat{s*x}) = t(\hat{s})$. By monotonicity of the bar we also have $S(s * x)$. Hence
\eqleft{
\begin{array}{lcl}
\Phi^t(\Delta)(G,H)(s)
	& \stackrel{(\ref{Phi-Y-eq})}{=} & \Delta(\lambda s' . \Psi(t(\widehat{s'}))(G, H)(s), {\mathcal H}^t(G, H))(s) \\
	& \stackrel{(\ddagger)}{=} & \Psi(t(\hat{s}))(G, H)(s) \\
	& \stackrel{\textup{L}\ref{constant-lemma}}{=} & H(s)(\lambda x . \Psi(t(\hat{s}))(G, H)(s * x)) \\
	& \stackrel{(*)}{=} & H(s)(\lambda x . \Psi(t(\widehat{s*x}))(G, H)(s * x)) \\
	& \stackrel{(\ddagger)}{=} & H(s)(\lambda x . \Delta(\lambda s' . \Psi(t(\widehat{s'}))(G, H)(s'), {\mathcal H}^t(G, H))(s*x)) \\
	& \stackrel{(\ref{Phi-Y-eq})}{=} & H(s)(\lambda x . \Phi^t(\Delta)(G,H)(s * x)). 
\end{array}
}
Otherwise, if $\neg S(s)$ then
\eqleft{
\begin{array}{lcl}
\Phi^t(\Delta)(G,H)(s)
	& \stackrel{(\ref{Phi-Y-eq})}{=} & \Delta(\lambda s' . \Psi(t(\widehat{s'}))(G, H)(s'), {\mathcal H}^t(G, H))(s) \\
	& \stackrel{(\ddagger)}{=} & {\mathcal H}^t(G,H)(s)(\lambda x . \Delta(\lambda s' . \Psi(t(\widehat{s'}))(G, H)(s'), {\mathcal H}^t(G, H))(s*x)) \\
	& \stackrel{(\ref{Phi-Y-eq})}{=} & {\mathcal H}^t(G,H)(s)(\lambda x . \Phi^t(\Delta)(G,H)(s*x)) \\
	& \stackrel{(\ref{H-eq})}{=} & H(s)(\lambda x . \Phi^t(\Delta)(G,H)(s*x)).
\end{array}
}
\end{proof}

\section{Main Result}
\label{sec-main-result}

We have just shown that Spector's bar recursion, when $Y$ is a fixed $\T$-term $t$, is $\T$-definable in the general bar recursion for any predicate $S$ securing $t$. We will now prove that for $\tau=\NN$ or $\tau= \NN \to \NN$ and for any fixed term $t[\alpha]$, there exists some predicate $S$ securing the closed term $\lambda \alpha . t[\alpha]$ such that there is a $\T$-definable functional which satisfies the general bar recursion equation $\GeneralEq{S}$. For the rest of the section, let $\tau$ and $\sigma$ be fixed.

\begin{definition}  For each finite type $\eta$ we associate inductively a new finite type $\eta^\circ$ as:
	\[
	\begin{array}{lcl}
	\NN^\circ & = & (\tau^\NN \to \NN) \times ((\tau^* \to \sigma) \to (\tau^* \to (\tau \to \sigma) \to \sigma) \to \tau^* \to \sigma) \\[2mm]
	(\rho_0 \to \rho_1)^\circ & = & \rho_0^\circ  \to \rho_1^\circ
	\end{array}
	\]
\end{definition}

Since terms $t$ of type $\NN^\circ$ in fact consist of a pair of functionals, we will use the terminology
\begin{itemize}
	\item $\Val_t \colon \tau^\NN \to \NN$ for the first component of $t$, and
	\item $\BRSec_t \colon (\tau^* \to \sigma) \to (\tau^* \to (\tau \to \sigma) \to \sigma) \to \tau^* \to \sigma$ for the second component,
\end{itemize}
so that $t = \apc{\Val_t}{\BRSec_t}$.


\begin{lemma} \label{type-level-lemma} $\level{\eta^\circ} = 2 + \max \{ 1 + \level{\tau}, \level{\sigma} \} + \level{\eta}$. 
\end{lemma}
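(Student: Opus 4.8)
The plan is to argue by induction on the structure of the finite type $\eta$, following exactly the two inductive clauses in the definition of $(\cdot)^\circ$. Throughout I would write $M \eqdef \max\{1 + \level{\tau}, \level{\sigma}\}$, so that the target identity reads $\level{\eta^\circ} = M + 2 + \level{\eta}$. The only genuine computation sits in the base case; the inductive step then falls out because the level function interacts additively with the function-space constructor, so the constant offset $M + 2$ is simply carried along.

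For the base case $\eta = \NN$ (where $\level{\eta} = 0$) I must show $\level{\NN^\circ} = M + 2$. Since $\NN^\circ$ is a product, its level is the maximum of the levels of its two components $A \eqdef \tau^\NN \to \NN$ and $B \eqdef (\tau^* \to \sigma) \to (\tau^* \to (\tau \to \sigma) \to \sigma) \to \tau^* \to \sigma$. Using $\level{\tau^*} = \level{\tau}$ together with $\level{\rho \to \eta} = \max\{\level{\rho}+1, \level{\eta}\}$, a direct computation gives $\level{A} = \max\{1, \level{\tau}\} + 1 \leq M + 1$. For $B$ I would compute the levels of its three arguments and its result: the first argument $\tau^* \to \sigma$ has level $M$, the result $\sigma$ has level $\level{\sigma} \leq M$, and the decisive middle argument $\tau^* \to (\tau \to \sigma) \to \sigma$ has level $M + 1$, since $\level{(\tau\to\sigma)\to\sigma} = M + 1$ dominates $\level{\tau^*} + 1$. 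Feeding a level $M+1$ argument into $B$ raises the overall level to $M + 2$, so $\level{B} = M + 2$; as $\level{A} \leq M + 1 < M + 2$, I conclude $\level{\NN^\circ} = \level{B} = M + 2$, which is exactly $M + 2 + \level{\NN}$.

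For the inductive step $\eta = \rho_0 \to \rho_1$ I would unfold $\eta^\circ = \rho_0^\circ \to \rho_1^\circ$ and compute $\level{\eta^\circ} = \max\{\level{\rho_0^\circ} + 1, \level{\rho_1^\circ}\}$. Applying the induction hypothesis to each $\rho_i^\circ$ replaces these by $M + 2 + \level{\rho_i}$, and since the additive constant $M + 2$ pulls outside the maximum, this equals $M + 2 + \max\{\level{\rho_0} + 1, \level{\rho_1}\} = M + 2 + \level{\rho_0 \to \rho_1}$, closing the induction.

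The main obstacle is purely the bookkeeping in the base case: one must correctly identify the middle argument $\tau^* \to (\tau \to \sigma) \to \sigma$ of $B$ as the subterm of highest level, observe that its two arrow layers over $\sigma$ push the level of $B$ to $M + 2$, and check that neither $A$ nor the remaining pieces of $B$ exceed this value. Once the base case is pinned down, the inductive step is routine, since the level of a function type and the offset $M + 2$ combine additively in precisely the manner the statement predicts.
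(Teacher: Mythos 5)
Your proof is correct and takes essentially the same route as the paper's: structural induction on $\eta$, where the base case identifies the middle argument $\tau^* \to (\tau \to \sigma) \to \sigma$ (of level $M+1$) as the component dictating $\level{\NN^\circ} = M + 2$, and the inductive step uses that the constant offset $M+2$ commutes with the maximum in $\level{\rho_0 \to \rho_1}$. If anything, your base case is slightly more explicit than the paper's, since you also verify that the first component $\tau^\NN \to \NN$ of the product stays strictly below $M+2$, a check the paper leaves implicit.
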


\begin{proof} By induction on the structure of $\eta$. 
	\begin{itemize}
		\item $\eta = \NN$. First notice that the type level of $\NN^\circ$ is dictated by the component $\tau^* \to (\tau \to \sigma) \to \sigma$. Since $$\level{\tau^* \to (\tau \to \sigma) \to \sigma} = \max \{ 2 + \level{\tau}, 1 + \level{\sigma} \}$$ we have that
		\[ \level{(\tau^* \to \sigma) \to (\tau^* \to (\tau \to \sigma) \to \sigma) \to \tau^* \to \sigma} = 2 + \max \{ 1 + \level{\tau}, \level{\sigma} \} \]
		so $\level{\NN^\circ} = 2 + \max \{ 1 + \level{\tau}, \level{\sigma} \} + \level{\NN}$.
		\item $\eta= \rho_0 \to \rho_1$. By definition
		$\level{\eta^{\circ}} = \max\{ 1 + \level{\rho_0^{\circ}}, \level{\rho_1^{\circ}}\}$. 
		By induction hypothesis for $i=0,1$ we have
		$$\level{\rho_i^{\circ}} = 2 + \max \{ 1+\level{\tau} , \level{\sigma} \} + \level{\rho_i}.$$
		Therefore
		\[
		\begin{array}{lcl}
		\level{\eta^{\circ}} & = & \max\{ 1 + \level{\rho_0^{\circ}}, \level{\rho_1^{\circ}}\}  \\
		& = & 2 + \max \{ 1+\level{\tau} , \level{\sigma} \}  + \max\{1+ \level{\rho_0}, \level{\rho_1}\}\\
		& = & 2 + \max \{ 1+\level{\tau} , \level{\sigma} \}  + \level{\eta}.
		\end{array}
		\]
	\end{itemize}
\end{proof}
\subsection{Translation (case $\tau = \NN$)}

For the rest of this sub-section we shall also assume that $\tau = \NN$, and that $\alpha$ is a special variable of type $\NN \to \NN$. In Section \ref{sec-case-fct} we describe which small changes need to be made to treat the case $\tau = \NN \to \NN$. Moreover, we assume $\sigma$ to be an arbitrary but fixed finite type.

Given a term $t \colon \NN$ with the special variable $\alpha$ as the only free variable, our goal is to define a term $t^\circ \colon \NN^\circ$ in such a way that $\Val_{t^\circ} = \lambda \alpha . t$, allowing us to evaluate $t$ for concrete values of $\alpha$, and $\BRSec_{t^\circ}$ will be such that $\GeneralEq{S}(\BRSec_{t^\circ})$, for some bar $S$ which secures $\lambda \alpha . t$. For a term $t$ of a higher-type we will define $t^\circ$ in such a way that this property is preserved at ground type. 

\begin{definition} \label{circ-def} Let $\Psi(k)$ be the $\T$-term defined in the proof of Lemma \ref{constant-lemma} (defining bar recursion in the special case when $Y$ is the constant functional $\lambda \alpha . k$). Assume a given mapping of variables $x \colon \eta$ to variables $x^\circ \colon \eta^\circ$, and let $\alpha$ be a special variable of type $\NN \to \tau$, where in this section $\tau$ is assume to be $\NN$. For any term $t \colon \rho$ in system $\T$, define $t^\circ \colon \rho^\circ$ inductively as follows:
\[
\begin{array}{lcl}
	0^\circ & \eqdef & \apc{\lambda \alpha. 0}{\lambda G,H.G} \\[2mm]
	\Suc^\circ & \eqdef &  \lambda x^{\NN^\circ} . \apc{\lambda \alpha . \Suc(\Val_x(\alpha))}{\BRSec_x}\\[2mm]
	\alpha^\circ & \eqdef &  \lambda x^{\NN^\circ} . \apc{\lambda \alpha . \alpha(\Val_x(\alpha))}{\lambda G, H. \BRSec_x(\lambda s' . \Psi(\Val_x(\widehat{s'}))(G,H)(s'),H)} \\[2mm]
	%
	(x^\eta)^\circ & \eqdef & x^\circ \\[2mm]
	(\lambda x^\eta . t)^\circ & \eqdef &\lambda x^\circ . t^\circ \\[2mm]
	(u v)^\circ & \eqdef & u^\circ v^\circ  \\[2mm]
	(\Rec^\eta)^\circ & \eqdef & \lambda a^{\eta^\circ}, F^{\NN^\circ \to \eta^\circ \to \eta^\circ}, x^{\NN^\circ}, v^{\rho^\circ} . \apc{\lambda \alpha . \Val_{r[\Val_x(\alpha)]}(\alpha)}{{\sf B}}
\end{array}
\]
where in the case of the $\Rec^{\eta}$ we assume $\eta = \rho \to \NN$, and $r[n]$ and ${\sf B}$ are built from $a, F, x$ and $v$ as
\begin{itemize}
	\item $r[n] \eqdef \Rec^{\eta^\circ}(a, \lambda k^\NN . F(k^\circ))(n)(v)$
	\item ${\sf B}(G,H)(s) \eqdef \BRSec_x(\lambda s' . \BRSec_{r[\Val_x(\widehat{s'})]}(G,H)(s'),H)(s)$
\end{itemize}
using the abbreviation $k^\circ \eqdef \apc{\lambda \alpha. k}{\lambda G, H .G}$ in the definition of $r[n]$. If $\eta = \NN$ then we may omit $\rho$ and the variable $v^{\rho^\circ}$, and should define $r[n] \eqdef \Rec^{\eta^\circ}(a, \lambda k^\NN . F(k^\circ))(n)$.
\end{definition}

Note that if $t:\NN$ has variables $\alpha$ and $x_1,\ldots,x_n$ free, then $t^\circ$ will only have $x_1^\circ, \ldots, x_n^\circ$ free.

\subsection{Verification}

We will now show that for any term $t[\alpha] \colon \NN$, the second component of $(t[\alpha])^\circ$, i.e. $\BRSec_{(t[\alpha])^\circ}$, is a term in system $\T$ which defines a general bar recursion for some bar predicate $S$ which secures $\lambda \alpha . t[\alpha]$.

\begin{theorem}[$\HAomega + \AC_0$] \label{main-theorem} Let $\tau = \NN$ and $t \colon \NN$ be a term of system $\T$ with only $\alpha^{\NN \to \tau}$ as free variable. Then there exists a bar $S$ which secures $\lambda \alpha . t$ such that $\GeneralEq{S}(\BRSec_{t^\circ})$.
\end{theorem}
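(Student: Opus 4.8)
The plan is to prove a stronger statement by a logical-relations argument run over the inductive clauses of Definition \ref{circ-def}, from which Theorem \ref{main-theorem} falls out as the ground-type instance. For each finite type $\rho$ I would define a relation $\mathcal{R}_\rho$ between a $\T$-term $u \colon \rho$ (in which the special variable $\alpha$ may occur free) and an $\alpha$-closed term $w \colon \rho^\circ$. At the base type, set $\mathcal{R}_\NN(u, w)$ to hold iff (a) $\Val_w = \lambda \alpha. u$ is provable, and (b) there is a bar $S$ securing $\lambda \alpha . u$ with $\GeneralEq{S}(\BRSec_w)$. At arrow type, extend in the usual way: $\mathcal{R}_{\rho_0 \to \rho_1}(f, g)$ iff $\mathcal{R}_{\rho_0}(u, w)$ implies $\mathcal{R}_{\rho_1}(f\,u, g\,w)$ for all $u, w$. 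The fundamental lemma to prove is then: for every $\T$-term $t \colon \rho$ with free variables among $\alpha, x_1, \ldots, x_n$ (of types $\eta_1, \ldots, \eta_n$) and all families of $\alpha$-closed terms with $\mathcal{R}_{\eta_i}(u_i, w_i)$, one has $\mathcal{R}_\rho(t[\vec{u}/\vec{x}], t^\circ[\vec{w}/\vec{x^\circ}])$. Specialising to a term $t \colon \NN$ with only $\alpha$ free ($n = 0$) yields exactly clause (b) for $t^\circ$, which is the theorem.

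The proof of the fundamental lemma is by induction on the structure of $t$, one case per clause of Definition \ref{circ-def}. The clauses for variables, $\lambda$-abstraction and application are the standard bookkeeping of any logical-relations proof and carry no content about bars. The constant $0$ is handled by the everywhere-true bar $S \eqdef \lambda s . \top$, which plainly secures $\lambda \alpha . 0$ and for which $\GeneralEq{S}(\lambda G, H . G)$ holds since its left branch is always taken; the case $\Suc$ reuses, unchanged, the bar provided by the induction hypothesis, because composing a value that is constant on a cone with $\Suc$ keeps it constant.

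The first case with real content is $\alpha$. Given $\mathcal{R}_\NN(u, w)$ with securing bar $S$, the value $\lambda \alpha . \alpha(u)$ is no longer secured by $S$ alone: once $S(s)$ fires, $\lambda \beta . u(s * \beta)$ is constant with value $m = \Val_w(\hat s)$, but to pin down $\alpha(m)$ one must also have read position $m$, i.e. $|s| > m$. I would therefore define the refined bar $S'(s) \eqdef S(s) \wedge \Val_w(\hat s) < |s|$ and check the three bar conditions (monotonicity and the bar property both use that $m$ has stabilised once $S$ fires) together with the fact that $S'$ secures $\lambda \alpha . \alpha(u)$. The matching equation $\GeneralEq{S'}$ for the second component of $\alpha^\circ(w)$ is then verified by a calculation paralleling the two case splits in the proof of Theorem \ref{secure-br-thm}, invoking $\GeneralEq{S}(\BRSec_w)$ and the defining equations of $\Psi$ from Lemma \ref{constant-lemma}.

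The main obstacle is the recursor clause. Writing $\eta = \rho \to \NN$ and $\mathcal{R}_\NN(u_x, w_x)$ for the recursion argument with securing bar $S_x$, I would first prove by an inner induction on a concrete numeral $m$ that $r[m]$ is related to $\Rec(u_a, u_F)(m)$, combining at the base the bar from $\mathcal{R}(a, a^\circ)$ and at the step the bars delivered by $\mathcal{R}(F, F^\circ)$ applied to the previous level; this yields, for each $m$, a bar $S_m$ securing $\lambda \alpha . \Rec(u_a, u_F)(m)$. The difficulty is that in $(\Rec^\eta)^\circ$ the number of iterations is $\Val_{w_x}(\alpha)$, which varies with $\alpha$, so the final bar $S$ must first wait for $S_x$ to fire --- fixing the numeral $m = \Val_{w_x}(\hat s)$ --- and only then apply the level-$m$ bar at $s$; concretely $S(s) \eqdef S_x(s) \wedge S_{\Val_{w_x}(\hat s)}(s)$, which is decidable because $m$ is determined by $s$ once $S_x(s)$ holds. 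Establishing the three bar conditions for this dependent combination, checking that $S$ secures $\lambda \alpha . \Rec(u_a, u_F)(u_x)$, and verifying $\GeneralEq{S}({\sf B})$ by unfolding the definition of ${\sf B}$ against $\GeneralEq{S_x}(\BRSec_{w_x})$ and the inner induction, is the technical heart of the argument; this is also the point at which $\AC_0$ is used, to extract from the existential bar-statements supplied by the logical relation at each recursion level a single \emph{uniform} decidable family $S_m$ out of which the dependent bar $S$ is assembled.
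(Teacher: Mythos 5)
Your proposal follows essentially the same route as the paper's own proof: the same logical relation (value agreement plus existence of a securing bar satisfying the $\GeneralEq{S}$ equation at ground type, extended in the standard way at arrow types), the same bar constructions in each case (the everywhere-true bar for $0$, the reused bar for $\Suc$, $S_x(s) \wedge \Val_x(\hat{s}) < |s|$ for $\alpha$, and $S_x(s) \wedge S_{\Val_{w_x}(\hat{s})}(s)$ for the recursor), and the same invocation of $\AC_0$ to collect the level-wise bars into a single sequence. The only cosmetic difference is that you state the fundamental lemma substitution-style for terms with $\alpha$ free, whereas the paper abstracts $\alpha$ and the free variables into functionals of type $\NN^\NN \to \rho$; these formulations are equivalent.
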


\begin{proof} 
Let $\sim_\rho \, \subseteq \, \rho^\circ \times (\NN^\NN \to \rho)$ be the logical relation between terms of system $\T$ defined as:
\[
\begin{array}{ccc}
f^{\NN^\circ} \sim_{\NN} g^{\NN^\NN \to \NN} & \eqdef & \Val_f = g \,\wedge\, \exists S (S \mbox{ is a bar securing } g \mbox{ and }  \GeneralEq{S}(\BRSec_f)) \\[2mm]
f^{\rho_0^\circ \to \rho_1^\circ} \sim_{\rho_0 \to \rho_1} g^{\NN^\NN \to (\rho_0 \to \rho_1)} & \eqdef &
    \forall x^{\rho_0^\circ} \forall y^{\NN^\NN \to \rho_0} (x \sim_{\rho_0} y \implies f(x) \sim_{\rho_1}  \lambda \alpha . g(\alpha)(y \alpha))
\end{array}
\]
We prove that for any $t$ with free variables $\vec{x}$ and (possibly) $\alpha$, $(\lambda \vec{x}.t)^\circ \sim_\rho \lambda \alpha \lambda \vec{x} . t$ by structural induction over $t$, where $\rho$ is the type of $\lambda \vec{x}.t$.
\begin{itemize}
\item $t = 0$. We need to show that
\[
0^\circ = \apc{\lambda \alpha. 0}{\lambda G,H, s . G(s)} \sim_{\NN}  \lambda \alpha. 0.
\]
Clearly we have $\Val_{0^\circ} = \lambda \alpha . 0$. Let $S(s) \eqdef {\sf true}$, which is a bar securing $\lambda \alpha . 0$. Then, indeed we also have $\GeneralEq{S}(\BRSec_{0^\circ})$ since
\[
\BRSec_{0^\circ}(G,H)(s) = G(s).
\]
\item $t= \Suc$. Let us show that $\Suc^\circ \sim_{\NN \to \NN} \lambda \alpha . \Suc$, i.e. for all $x \colon \NN^\circ$ and $g^{\NN^\NN \to \NN}$
\[ x \sim_\NN g \quad \implies \quad \Suc^\circ(x) \sim_\NN  \lambda \alpha . \Suc(g \alpha) \]
The premise ensures that $\Val_x = g$ and $\GeneralEq{S_x}(\BRSec_x)$ for some bar $S_x$ securing $g$. Hence, assuming the premise, and unfolding the definition of $\Suc^\circ$, we need to show
\[ \apc{\lambda \alpha . \Suc(g \alpha)}{\BRSec_x} \sim_\NN  \lambda \alpha . \Suc(g \alpha). \]
The only non-trivial part is to observe that if $S_x$ secures $g$ then it also secures $\lambda \alpha . \Suc(g \alpha)$.
\item $t = z^\rho$. When $t$ is simply a free-variable $z$ we must show that $(\lambda z . z)^\circ \sim_{\rho \to \rho} \lambda \alpha \lambda z . z$. But this follows directly from the definition of $\sim_{\rho \to \rho}$, noticing that $(\lambda z^\rho . z)^\circ \eqdef \lambda z^{\rho^\circ} . z$
\item $t = \alpha$. We need to show that $\alpha^\circ \sim_{\NN \to \NN} \lambda \alpha . \alpha$, i.e. for all $x^{\NN^\circ}$ and $g^{\NN^\NN \to \NN}$
\[ x \sim_\NN g \quad \implies \quad \alpha^\circ(x) \sim_\NN  \lambda \alpha . \alpha(g \alpha). \]
Again, the premise $x \sim_\NN g$ implies that $\Val_x = g$ and $\GeneralEq{S_x}(\BRSec_x)$, for some bar $S_x$ securing $g$. Hence, fix $x$ and $g$ such that $x \sim_\NN g$. Unfolding the definition of $\alpha^\circ$, we show
\[  \apc{\lambda \alpha . \alpha (g \alpha)}{\lambda G,H, s . \BRSec_x(\lambda s' . \Psi(g(\widehat{s'}))(G, H)(s'),H)(s)} \sim_\NN  \lambda \alpha . \alpha(g \alpha) \]
where $\Psi(g(\widehat{s'}))(G,H)$ is the $\T$-definition of $\BR^{\tau, \sigma}(G,H,\lambda \alpha. g(\widehat{s'}))$ (cf. Lemma \ref{constant-lemma}). 
The first conjunct of the definition of $\sim_\NN$ is trivially satisfied. Let $$S(s) \eqdef S_x(s) \wedge g \hat{s} < |s|.$$ Since $S_x(s)$ is a bar, and $S_x$ secures $g$, it follows that $S(s)$ is also a bar. Moreover, since $S_x$ secures $g$, it also follows that $S$ secures $\lambda \alpha . \alpha(g \alpha)$. Using the hypothesis $(\dagger) \, \GeneralEq{S_x}(\BRSec_x)$, we need to show 
\[ \GeneralEq{S}(\lambda G, H. \BRSec_x(\lambda s' . \Psi(g(\widehat{s'}))(G,H)(s'),H)). \]
Fix $G, H$ and $s$. Consider two cases: \\[1mm]
If $S(s)$ then $S_x(s)$ and $g \hat s < |s|$. In this case we trivially have
\[ \BRSec_x(\lambda s' . \Psi(g(\widehat{s'}))(G,H)(s'),H)(s) \stackrel{(\dagger)}{=} \Psi(g(\hat{s}))(G, H)(s) = G(s) \]
If $\neg S(s)$ then either $\neg S_x(s)$ or $g \hat s \geq |s|$. We consider two cases: \\[1mm]
If $S_x(s)$ holds then $g \hat s \geq |s|$. 
Moreover, $(\ddagger) \, g \hat{s} = g (\widehat{s * y})$ for any $y$, since $S_x$ secures $g$. By monotonicity of $S_x$ we also have $S_x(s*y)$ for any $y$.  
Hence
\begin{align*}
\BRSec_x(\lambda s' . \Psi(g(\widehat{s'}))(G, H)(s'),H)(s)
    &\stackrel{(\dagger)}{=} \Psi(g(\hat{s}))(G, H)(s) \\[1mm]
    &= H(s)(\lambda y . \Psi(g(\hat{s}))(G, H)(s * y)) \\
    &\stackrel{(\ddagger)}{=} H(s)(\lambda y . \Psi(g(\widehat{s*y}))(G, H)(s * y)) \\
    &\stackrel{(\dagger)}{=} H(s)(\lambda y . \BRSec_x(\lambda s' . \Psi(g(\widehat{s'}))(G, H)(s'),H)(s*y))
\end{align*}
If $\neg S_x(s)$ then
\begin{align*}
\BRSec_x(\lambda s' . \Psi(g(\widehat{s'}))(G, H)(s'),H)(s)
    &\stackrel{(\dagger)}{=} H(s)(\lambda y . \BRSec_x(\lambda s' . \Psi(g(\widehat{s'}))(G, H)(s'),H)(s*y))
\end{align*}
\item $t = \lambda x^\rho. u$. Trivial by induction hypothesis. 
\item $t = u^{\rho \to \tau} v^\rho$. For simplicity let us assume $u$ has free-variables $x_1^{\sigma_1}$ and $x_2^{\sigma_2}$ and $v$ has free-variables $x_2^{\sigma_2}$ and $x_3^{\sigma_3}$, which is enough to illustrate how the difference in the set of free-variables of $u$ and $v$ is handled. We must show that 
$$(\lambda x_1,x_2,x_3 . u v)^\circ \sim_{(\sigma_1\times\sigma_2\times\sigma_3) \to \tau} \lambda \alpha \lambda x_1,x_2,x_3. u v.$$ 
By the definition of $(\cdot)^\circ$ this is
\[  \lambda x_1^\circ, x_2^\circ, x_3^\circ. u^\circ v^\circ \sim_{(\sigma_1\times\sigma_2\times\sigma_3) \to \tau} \lambda \alpha \lambda x_1,x_2,x_3 . u v \]
By induction hypothesis we have $\lambda x_1^\circ, x_2^\circ. u^\circ \sim_{(\sigma_1\times \sigma_2) \to (\rho \to \tau)} \lambda \alpha, x_1, x_2. u$, i.e. for all $x_1^\circ$,$\tilde{x}_1^{\NN^\NN \to \sigma_1}$, $x_2^{\circ}$,$\tilde{x}_2^{\NN^\NN \to \sigma_2}$ and $y^{\circ}, \tilde{y}^{\NN^\NN \to \rho}$
\[
x_1^\circ \sim_{\sigma_1} \tilde{x}_1 \wedge x_2^\circ \sim_{\sigma_2} \tilde{x}_2 \wedge  y^\circ \sim_\rho \tilde{y} 
\quad \implies \quad u^\circ y^\circ \sim_{\tau}  \lambda \alpha . u[\tilde{x}_1 \alpha/x_1][\tilde{x}_2 \alpha/x_2](\tilde{y} \alpha),
\]
and $\lambda x_2^\circ,x_3^\circ. v^\circ \sim_{(\sigma_2 \times \sigma_3) \to \rho} \lambda \alpha, x_2,x_3. v$, i.e. for all $x_2^\circ$, $\tilde{x}_2^{\NN^\NN \to \sigma_2}$, $x_3^\circ$ and $\tilde{x}_2^{\NN^\NN \to \sigma_3}$
\[ x_2^\circ \sim_{\sigma_2} \tilde{x}_2 \wedge x_3^\circ \sim_{\sigma_3} \tilde{x}_3  \implies v^\circ  \sim_{\rho} \lambda \alpha . v[\tilde{x}_2 \alpha/x_2][\tilde{x}_3 \alpha/x_3]. \]
Therefore given for any $j \in \bp{1,2,3}$ $x_j^\circ$ and $ \tilde{x}_j^{\NN^\NN \to \sigma_j}$ such that $x_j \sim_{\sigma_j} \tilde{x}_j$, we have
\[ v^\circ \sim_{\rho} \lambda \alpha . v[\tilde{x}_2 \alpha/x_2][\tilde{x}_3 \alpha/x_3] \]
which we can plug into the first induction hypothesis to obtain
\[ 
\begin{array}{lcl}
u^\circ v^\circ & \sim_\tau & \lambda \alpha . u[\tilde{x}_1 \alpha/x_1][\tilde{x}_2 \alpha/x_2](v[\tilde{x}_2 \alpha/x_2][\tilde{x_3} \alpha/x_3])) \\[2mm] 
& = & (\lambda \alpha \lambda x_1,x_2,x_3. u v)(\alpha)(\tilde{x}_1 \alpha)(\tilde{x}_2 \alpha)(\tilde{x}_3 \alpha).
\end{array}
\]
\item $t = \Rec^{\eta}$. Without loss of generality we can assume that the recursor has type $\eta = \rho \to \NN$ for some type $\rho$. It is easy to check that $k^\circ \sim_\NN \lambda \alpha . k$, for any variable $k \colon \NN$, where $k^\circ$ is the abbreviation introduced at the end of Definition \ref{circ-def}. \\[1mm]
Assume $x \sim_{\NN} g$, $a \sim_\eta A$, $F \sim_{\NN \to \eta \to \eta} \psi$, $v \sim_\rho V$. We must show that
\[ (\Rec^\eta)^\circ(a,F)(x)(v) \sim_\NN \lambda \alpha. \Rec^\eta(A \alpha, \psi \alpha)(g \alpha)(V \alpha). \]
Or, unfolding the definition of $(\Rec^\eta)^\circ$, that
\[ \apc{\lambda \alpha . \Val_{r[\Val_x(\alpha)]}(\alpha)}{{\sf B}} \sim_\NN \lambda \alpha. \Rec^\eta(A \alpha, \psi \alpha)(g \alpha)(V \alpha) \]
where $r[n]$ and ${\sf B}$ are as in Definition \ref{circ-def}. Again we note that the premise $x \sim_\NN g$ implies that $\Val_{x} = g$ and $(\dagger) \, \GeneralEq{S_x}(\BRSec_{x})$, for a bar $S_x$ securing $g$. \\[1mm]
%
{\bf Claim 1}. For all $n^\NN$, $\Rec^{\eta^\circ}(a, \lambda k^\NN . F(k^\circ))(n) \sim_\eta \lambda \alpha. \Rec^\eta(A \alpha, \psi \alpha)(n)$. \\[1mm]
{\bf Proof}. By induction on $n$. If $n=0$, since $a \sim_{\eta} A$, then 
\begin{align*}
\Rec^{\eta^\circ}(a,  \lambda k^\NN . F(k^\circ))(0) \eqdef  a \sim_\eta A  \eqdef \lambda \alpha. \Rec^\eta(A \alpha,\psi \alpha)(0). 
\end{align*}
For $n>0$, by induction hypothesis we have,
\[ \Rec^{\eta^\circ}(a, \lambda k^\NN . F(k^\circ))(n-1) \sim_\eta \lambda \alpha. \Rec^{\eta}(A \alpha, \psi \alpha)(n-1) \]
Since $F \sim_{\NN \to \eta \to \eta} \psi$ and $(n-1)^\circ \sim_{\NN} \lambda \alpha . n - 1$, we have that for all $b \sim_\eta B$
\[ F((n-1)^\circ,b) \sim_\eta \lambda \alpha . \psi(\alpha)(n - 1, B \alpha). \]
Hence: 
\begin{align*}
\Rec^{\eta^\circ}(a,  \lambda k^\NN . F(k^\circ))(n) 
	&\eqdef  F((n-1)^\circ,\Rec^{\eta^\circ}(a, \lambda k^\NN . F(k^\circ))(n-1)) \\
	& \sim_\eta  \lambda \alpha . \psi(\alpha)(n - 1, \Rec^\eta(A \alpha, \psi \alpha)(n-1))\\
	&\eqdef \lambda \alpha. \Rec^\eta(A \alpha,\psi \alpha)(n). 
\end{align*}
This concludes the proof of the first claim. \\[2mm]
{\bf Claim 2}. For all $n^\NN$ 
\[ r[n] \sim_\NN \lambda \alpha . \Rec^\eta(A \alpha, \psi \alpha)(n)(V \alpha). \]
{\bf Proof}. Immediate from Claim 1 and the assumption $v \sim_\rho V$. \\[1mm]
Claim 2 in particular implies (by the definition of $\sim_\NN$) that for all $n^\NN$
\[ \Val_{r[n]}(\alpha)  = \Rec(A \alpha, \psi \alpha)(n)(V \alpha) \]
and $(\ddagger) \, \GeneralEq{S}(\BRSec_{r[n]})$, for some bar $S$ securing $\lambda \alpha . \Rec^\eta(A \alpha, \psi \alpha)(n)(V \alpha)$. By countable choice $\AC_0$ we have a sequence of bars $(S_n)_{n \in \NN}$. Taking $n = g \alpha$ we have
\begin{itemize}
	\item[] $(i)~\Val_{r[g \alpha]}(\alpha) = \Rec^\eta(A \alpha, \psi \alpha)(g \alpha)(V \alpha)$ 
\end{itemize}
Let $S(s) \eqdef S_x(s) \wedge S_{g \hat s}(s)$. We also have that
\begin{itemize}
	\item[] $(ii)~S$ is a bar securing $\lambda \alpha. \Rec^\eta(A \alpha, \psi \alpha)(g \alpha)(V \alpha)$ 
\end{itemize}
Indeed, since $S(s)$ implies that both $g$ and $\lambda \alpha . \Rec^\eta(A \alpha, \psi \alpha)(g \hat s)(V \alpha)$ are secure, which implies that $\lambda \alpha. \Rec^\eta(A \alpha, \psi \alpha)(g \alpha)(V \alpha)$ is secure. \\[1mm]
{\bf Claim 3}. $\GeneralEq{S}(\lambda G, H . \BRSec_x(\lambda s' . \BRSec_{r[g(\widehat{s'})]}(G,H)(s'),H)).$ \\[1mm]
{\bf Proof}. Fix $G, H$ and $s$. We consider two cases: \\[1mm]
If $S(s)$, then $S_x(s)$ and $S_{g \hat s}(s)$ also hold. Hence, 
\[
\begin{array}{lcl}
\BRSec_x(\lambda s' . \BRSec_{r[g(\widehat{s'})]}(G,H)(s'),H)(s) 
	& \stackrel{(\dagger)}{=} & \BRSec_{r[g(\hat{s})]}(G,H)(s) \\
 	& \stackrel{(\ddagger)}{=} & G(s).
\end{array}
\]
If $\neg S(s)$, then either $\neg S_x(s)$ or $\neg S_{g \hat s}(s)$. We consider two cases: \\[1mm]
If $S_x(s)$ then $\neg S_{g \hat s}(s)$. Then, using that $S_x(s)$ implies both $(*) \, g \hat s = g(\widehat{s * w})$ and $(**) \, S_x(s*w)$,
\[
\begin{array}{lcl}
\BRSec_x(\lambda s' . \BRSec_{r[g(\widehat{s'})]}(G,H)(s'),H)(s) 
	& \stackrel{(\dagger)}{=} & \BRSec_{r[g(\hat{s})]}(G,H)(s)) \\
	& \stackrel{(\ddagger)}{=} & H(s, \lambda w . \BRSec_{r[g(\hat{s})]}(G,H)(s*w)) \\
	& \stackrel{(*)}{=} & H(s, \lambda w . \BRSec_{r[g(\widehat{s*w})]}(G,H)(s*w)) \\
 	& \stackrel{(\dagger, **)}{=} & H(s, \lambda w . \BRSec_x(\lambda s' . \BRSec_{r[g(\widehat{s'})]}(G,H)(s'),H)(s*w)).
\end{array}
\]
Finally, if $\neg S_x(s)$ then the result follows directly by $(\dagger)$.
\end{itemize}
\end{proof}

By combining Theorems \ref{secure-br-thm} and \ref{main-theorem} we obtain:

\begin{corollary}[$\HAomega$] \label{main-cor} Let $\tau = \NN$ and $t \colon \NN$ be a $\T$-term with only $\alpha \colon \tau^\NN$ as free variable. Then $\SpectorEq(\Phi^t(B_{t^\circ}), Y = t)$. Moreover, if $t \in \T_i$ then $\Phi^t(B_{t^\circ}) \in \T_j$, where $j = 2 + \max\{1, \level{\sigma}\} + i$.
\end{corollary}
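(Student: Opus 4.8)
The plan is to read Corollary \ref{main-cor} as a direct composition of the two main theorems, keeping the argument short and concentrating the real effort on the fragment count. First I would apply Theorem \ref{main-theorem} to the term $t \colon \NN$ (whose only free variable is $\alpha$): it yields a bar $S$ securing $\lambda \alpha . t$ such that $\GeneralEq{S}(\BRSec_{t^\circ})$ holds. Next I would apply Theorem \ref{secure-br-thm}, instantiating its fixed closed term by $\lambda \alpha . t \colon \tau^\NN \to \NN$ and taking $\Delta \eqdef \BRSec_{t^\circ}$. Since this $S$ secures $\lambda \alpha . t$ and $\GeneralEq{S}(\BRSec_{t^\circ})$ holds, the implication supplied by Theorem \ref{secure-br-thm} delivers $\SpectorEq(\Phi^{\lambda \alpha . t}(\BRSec_{t^\circ}), Y = \lambda \alpha . t)$, which is the first assertion of the corollary once we write $\Phi^t$ for $\Phi^{\lambda \alpha . t}$. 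The resulting equation no longer refers to $S$, so the use of $\AC_0$ in Theorem \ref{main-theorem} is confined to producing the witnessing bar; the corollary itself is a purely universal equation between fixed $\T$-terms, which is why the base theory may be taken to be $\HAomega$ (this step deserves a one-line justification via conservativity of $\AC_0$ over $\HAomega$ for universal statements).

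For the quantitative part I would suppose $t \in \T_i$, so every recursor $\Rec^\eta$ occurring in $t$ satisfies $\level{\eta} \le i$. The translation of Definition \ref{circ-def} replaces each such recursor by a construction built from $\Rec^{\eta^\circ}$ together with the term $\Psi$ of Lemma \ref{constant-lemma}. Since $\tau = \NN$, Lemma \ref{type-level-lemma} gives $\level{\eta^\circ} = 2 + \max\{1,\level{\sigma}\} + \level{\eta} \le 2 + \max\{1,\level{\sigma}\} + i = j$, while $\Psi \in \T_{\max\{1,\level{\sigma}\}}$ and $\max\{1,\level{\sigma}\} \le j$. Every remaining clause of $(\cdot)^\circ$ is a pure $\lambda$-term, so I would conclude $\BRSec_{t^\circ} \in \T_j$. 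It then remains to see that forming $\Phi^t(\BRSec_{t^\circ})$ does not raise the level past $j$: by (\ref{Phi-Y-eq}), $\Phi^t(\Delta)$ is assembled from $\Delta$, the constant term $\Psi$, and $\mathcal{H}^t$, and $\mathcal{H}^t$ (by (\ref{H-eq})) only performs the decidable split $t(\hat s) < |s|$, contributing solely the recursors already present in $\lambda \alpha . t$, of level $\le i$. Hence $\Phi^t(\BRSec_{t^\circ}) \in \T_{\max\{j,\,i,\,\max\{1,\level{\sigma}\}\}} = \T_j$, as claimed.

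I expect the main obstacle to be bookkeeping rather than anything conceptual: one must verify that the translation genuinely introduces no hidden recursor of level exceeding $\level{\eta^\circ}$, in particular that all auxiliary occurrences of $\Psi$ (both inside $(\cdot)^\circ$ and inside $\Phi^t$) and of the first-component evaluations $\Val_{(\cdot)}$ remain within $\T_j$. A secondary point of care is that the fragment claim of Theorem \ref{secure-br-thm} is literally stated for $t$ in the small fragment $\T_{\max\{1,\level{\sigma}\}}$, whereas here $\lambda \alpha . t$ may sit only in the larger $\T_i$; I would therefore phrase the level count directly in terms of the construction (\ref{Phi-Y-eq}), so that it reads off the bound $\max\{j, i, \max\{1,\level{\sigma}\}\} = j$ without having to strengthen the cited statement.
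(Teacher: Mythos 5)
Your proposal is correct and follows essentially the same route as the paper: compose Theorem \ref{main-theorem} with Theorem \ref{secure-br-thm}, dispose of $\AC_0$ by noting the conclusion is purely universal (the paper invokes modified realizability for exactly this conservativity), and derive the fragment bound from Lemma \ref{type-level-lemma}. Your additional bookkeeping --- tracking $\Psi$, $\mathcal{H}^t$, and the mismatch between the corollary's $\T_i$ and the fragment hypothesis of Theorem \ref{secure-br-thm} --- is more explicit than the paper's one-line level count, but it is the same argument, not a different one.
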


\begin{proof} By Theorem \ref{main-theorem}, we have $\GeneralEq{S}(\BRSec_{t^\circ})$ for a bar predicate $S$ securing $\lambda \alpha . t[\alpha]$. By Theorem \ref{secure-br-thm} it then follows that $\SpectorEq(\Phi^t(\BRSec_{t^\circ}), Y = t)$. It remains to notice that if $t$ uses a recursor of type $\eta$ then
$t^\circ$ uses a recursor of type $\eta^\circ$. Hence, if $i = \level{\eta}$, by Lemma \ref{type-level-lemma} we have that $\level{\eta^\circ} = 2 + \max \{ 1 + \level{\tau}, \level{\sigma} \} + \level{\eta}$. Since $\level{\tau} = 0$ and $\level{\eta} = i$ we are done. Although we have used countable choice $\AC_0$ in the proof of Theorem \ref{main-theorem}, by modified realizability we can eliminate it here since this corollary is purely universal, so the verification proof that $\Phi^t(B_{t^\circ})$ is a $\T$-definition of bar recursion for $Y = t$ can actually be carried out within $\HAomega$.
\end{proof}

\begin{remark} Note that our construction is parametric in $G$ and $H$, in the sense that we do not require $G$ and $H$ to be $\T$-definable. But once we consider concrete $\T$ terms $Y, G$ and $H$, we get as a corollary Schwichtenberg's result that the functional $\lambda s. \BR(G,H,Y)(s)$ is also $\T$-definable. Unfortunately our construction might not give the ``optimal'' $\T$-definition of $\lambda s. \BR(G,H,Y)(s)$. Indeed, when $Y, G$ and $H$ are in $\T_0$ we obtain a definition of $\lambda s. \BR(G,H,Y)(s)$ in $\T_3$. Howard's analysis \cite{Howard2} suggests that in such cases a definition $\lambda s. \BR(G,H,Y)(s)$ already in $\T_1$ exists. This seems to be the price we need to pay for having a more general construction that works uniformly in $G$ and $H$.
\end{remark}

\begin{remark} Our original motivation for this work started with our bar-recursive bound \cite{BRBOUND} for the Termination Theorem by Podelski and Rybalchenko \cite{Podelski}. The Termination Theorem characterizes the termination of transition-based programs as a properties of well-founded relations. Its classical proof requires Ramsey's Theorem for pairs \cite{Ramsey}. By using Schwichtenberg's result, we proved that under certain hypotheses our bound is in system $\T$. By applying the main construction from this paper we can obtain explicit constructions of the bounds in system $\T$.
\end{remark}

\subsection{Illustrative Example}

Corollary \ref{main-cor} is a generalization of the result obtained by Schwichtenberg in \cite{Schwichtenberg}, but note that our construction is much more explicit, and one we can easily replace bar recursive definitions by their equivalent system $\T$ ones (under the conditions of Schwichtenberg's result). 

Let us go back to the example alluded to in the introduction, i.e. Spector's bar recursion for $t[\alpha] = \Rec^\NN(0, \lambda k.\alpha)(\alpha(0))$, where $\lambda k . \alpha$ is ignoring the first argument $k$ so that $$t[\alpha] = \alpha(\alpha(\ldots(\alpha(0))\ldots)$$ with $\alpha(0)$ applications of $\alpha$. In order to work out the $\T$-definition of the bar recursive functional $\lambda G, H, s . \BR^{\NN, \NN}(G, H, \lambda \alpha . t[\alpha])(s)$ we first calculate $\BRSec_{t^\circ}$,
\[
\BRSec_{(t[\alpha])^\circ}(G,H)(s) 
	= \BRSec_{(\alpha(0))^\circ}(\lambda s'. \BRSec_{r[\Val_{(\alpha(0))^\circ} (\widehat{s'})]}(G,H)(s'),H)(s) 
	= \BRSec_{(\alpha(0))^\circ}(\lambda s'. \BRSec_{r[\widehat{s'}(0)]}(G,H)(s'),H)(s)
\]
where $r[n] = \Rec^{\NN^\circ}(0^\circ, \lambda k.\alpha^\circ)(n)$. By Corollary \ref{main-cor}, $\GeneralEq{S}(\BRSec_{(t[\alpha])^\circ})$ for some bar $S$ which secures $\lambda \alpha . t[\alpha]$. Hence, $\BR^{\NN,\NN}(G,H,\lambda \alpha. t[\alpha])$ can be $\T$-defined as
\eqleft{
\BR^{\NN,\NN}(G,H,\lambda \alpha. t[\alpha]) = \Phi^{\lambda \alpha . t[\alpha]}(\lambda G, H. \BRSec_{t^\circ}(G,H))(G, H)
}
with $\Phi^{\lambda \alpha . t[\alpha]}$ as in the proof of Theorem \ref{secure-br-thm}, i.e.
\eqleft{
\BR^{\NN,\NN}(G,H,\lambda \alpha. t[\alpha]) = \BRSec_{t^\circ}(\lambda s'.\Psi(t[\widehat{s'}])(G, H)(s'), \mathcal{H}^{\lambda \alpha . t[\alpha]}(G, H) )
}
where
\eqleft{
{\mathcal H}^Y(G,H)(s)(f^{\NN \to \NN}) \eqdef
\begin{cases}
    G(s) & {\rm if} \; Y(\hat s) < |s| \\[2mm]
    H(s)(f) & {\rm otherwise}.
\end{cases}
}

\subsection{The Case $\tau = \NN \to \NN$}
\label{sec-case-fct}

We now discuss how to extend the construction given in Definition \ref{circ-def}, and the proof of Theorem \ref{main-theorem}, so that Corollary \ref{main-cor} also holds when $\tau = \NN \to \NN$. In this case $\alpha$ has type $\NN \to (\NN \to \NN)$. First, in Definition \ref{circ-def}, when $\tau=\NN \to \NN$ we modify the definition of $\alpha^\circ$ as
\[ \alpha^\circ  \eqdef  \lambda x^{\NN^{\circ}}  y^{\NN^{\circ}} . \apc{\Val}{\BRSec} \]
where
\begin{itemize}
	\item $\Val(\alpha) \eqdef \alpha(\Val_x(\alpha))(\Val_y(\alpha))$,
	\item $\BRSec(G,H)(s) \eqdef \BRSec_y(\BRSec_x(\lambda s'.\Psi(\max\bp{\Val_x(\widehat{s'}), \Val_y(\widehat{s'})})(G,H)(s'),H),H)(s)$.
\end{itemize}
We also need to modify the proof of Theorem \ref{main-theorem} in the place where the case $\alpha$ is treated. Let $x^\circ \sim g$ and $y^\circ  \sim h$. This implies that $(\dagger)$ $\GeneralEq{S_x}(\BRSec_x)$ for a bar $S_x$ securing $g$, and $(\ddagger)$ $\GeneralEq{S_y}(\BRSec_y)$ for a bar $S_y$ securing $h$.  Define the predicate:
\[S(s) \eqdef S_x(s) \wedge S_y(s) \wedge \max\bp{\Val_x(\hat{s}), \Val_y(\hat{s})} < |s|.\] 
That $S(s)$ is a bar follows directly from the assumptions that $S_x$ and $S_y$ are bars. We show that 
$\GeneralEq{S}(\BRSec)$.
Consider two cases:

\noindent If  $S(s)$ holds, then  $S_x(s) \wedge S_y(s) \wedge \max\bp{g(\hat s), h(\hat{s}) }< |s|$. In this case we trivially have
\[
\begin{array}{lcl}
\BRSec(G,H)(s)
	 & \stackrel{(\ddagger)}{=} & \BRSec_x(\lambda s'.\Psi(\max\bp{g(\widehat{s'}), h(\widehat{s'})})(G,H)(s'),H)(s) \\[1mm]
	 & \stackrel{(\dagger)}{=} & \Psi(\max\bp{g(\hat s), h(\hat s)})(G,H)(s) \\[1mm]
	 & = & G(s). 
\end{array}
\]
If  $\neg S(s)$ holds,  we consider three cases:

\noindent If $\neg S_y(s)$ then
	\begin{align*}
    	\BRSec(G,H)(s)
    	    &\eqdef \BRSec_y(\BRSec_x(\lambda s'.\Psi(\max\bp{g(\widehat{s'}), h(\widehat{s'})})(G,H)(s'),H),H)(s)\\
        	&\stackrel{(\ddagger)}{=} H(s)( \lambda z.\BRSec_y(\BRSec_x(\lambda s'.\Psi(\max\bp{g(\widehat{s'}), h(\widehat{s'})})(G,H)(s'),H),H)(s*z))\\
        	&\eqdef H(s)(\lambda z . \BRSec(G,H)(s * z)).
	\end{align*}
	
\noindent If $S_y(s)$ but $\neg S_x(s)$ then, by monotonicity we have also $S_y(s*z)$ for every $z$. Thus: 
		\begin{align*}
		\BRSec(G)(s)
	    &\eqdef \BRSec_y(\BRSec_x(\lambda s'.\Psi(\max\bp{g(\widehat{s'}), h(\widehat{s'})})(G,H)(s'),H),H)(s)\\
		&\stackrel{(\ddagger)}{=}  \BRSec_x(\lambda s'.\Psi(\max\bp{g(\widehat{s'}), h(\widehat{s'})})(G,H)(s'),H)(s)\\
		&\stackrel{(\dagger)}{=} H(s)(\lambda z . \BRSec_x(\lambda s'.\Psi(\max\bp{g(\widehat{s'}), h(\widehat{s'})})(G,H)(s'),H)(s * z)) \\
		&\stackrel{(\ddagger)}{=} H(s)( \lambda z.\BRSec_y(\BRSec_x(\lambda s'.\Psi(\max\bp{g(\widehat{s'}), h(\widehat{s'})})(G,H)(s'),H),H)(s*z))\\
		&\eqdef H(s)(\lambda z . \BRSec(G,H)(s * z)).
		\end{align*}
		
\noindent If $S_y(s)$ and $S_x(s)$ and $\max\bp{g(\hat s), h(\hat s)}\geq |s|$. From $S_y(s)$ and $S_x(s)$ we have $(*)$ $g(\widehat{s*z})= g(\hat s) \wedge h(\widehat{s*z})= h(\hat s)$ for every $z$. Moreover, by monotonicity we have also $S_y(s*z)$ and $S_x(s* z)$. 
	\begin{align*}
	\BRSec(G,H)(s)
	&\eqdef \BRSec_y(\BRSec_x(\lambda s'.\Psi(\max\bp{g(\widehat{s'}), h(\widehat{s'})})(G,H)(s'),H),H)(s)\\
	&\stackrel{(\ddagger)}{=}  \BRSec_x(\lambda s'.\Psi(\max\bp{g(\widehat{s'}), h(\widehat{s'})})(G,H)(s'),H)(s)\\
	&\stackrel{(\dagger)}{=}   \Psi(\max\bp{g(\hat s), h(\hat s)})(G, H)(s) \\
	&\eqdef H(s)(\lambda z . \Psi(\max\bp{g(\hat{s}), h(\hat{s})})(G, H)(s * z)) \\
	&\stackrel{(*)}{=}   H(s)(\lambda z . \Psi(\max\bp{g(\widehat{s*z}), h(\widehat{s*z})})(G, H)(s * z)) \\
	&\stackrel{(\dagger)}{=}  H(s)( \lambda z.\BRSec_x(\lambda s'.\Psi(\max\bp{g(\widehat{s'}), h(\widehat{s'})})(G,H)(s'),H)(s*z)) \\
	&\stackrel{(\ddagger)}{=} H(s)( \lambda z.\BRSec_y(\BRSec_x(\lambda s'.\Psi(\max\bp{g(\widehat{s'}), h(\widehat{s'})})(G,H)(s'),H),H)(s*z)) \\
	&\eqdef H(s)(\lambda z . \BRSec(G,H)(s * z)).
	\end{align*}

\vspace{3mm}
\noindent \textbf{Acknowledgements.} The authors are grateful to Stefano Berardi, Ulrich Kohlenbach, Helmut Schwichtenberg and the reviewer for various useful comments and suggestions.

\bibliographystyle{asl}
\bibliography{biblio}	

\end{document}